\newtheorem{theorem}{Theorem}[section]
\newtheorem{definition}[theorem]{Definition}
\theoremstyle{definition}
\title{A note on the paper ``Contraction mappings in $b$-metric spaces'' by Czerwik}
\author{S\'andor Kaj\'ant\'o}
\author{Andor Luk\'acs}
\address{``Babe\c{s}-Bolyai'' University \\ Faculty of Mathematics and Computer Science \\
Kog\u{a}lniceanu Street No. 1 \\
400084, Cluj-Napoca \\
Romania}
\email{lukacs.andor@math.ubbcluj.ro}
\begin{document}

\begin{abstract}
In this paper we correct an inaccuracy that appears in the proof of Theorem 1. in Czerwik \cite{czerwik_1993}.
\end{abstract}

\maketitle
{\bf Keywords:}
$b$-metric spaces, fixed-point theorems

{\bf MSC (2010):} 47H10
\section{Introduction}

It is well known that on certain function spaces the natural ``norm'' does not satisfy the triangle inequality (for example, on $L_p(\mathbb{R}^n)$ when $p\in(0,1)$ the functional that is usually called the ``$p$-norm'' has this property). In view of this observation, Czerwik in \cite{czerwik_1993} proposed a gene\-ra\-li\-sa\-tion of metric spaces by relaxing the triangle inequality in a way that allows the extension of fixed point theory to cover also these badly behaving function spaces. The resulting notion of $b$-metric spaces created a new direction in which fixed point theory could be developed and Czerwik was the first to generalise Banach's fixed point theorem for this case. Since then many authors contributed to this development and nowadays the field occupies a considerable position in fixed point theory.

The aim of our paper is to correct an inaccuracy that appears in the proof of Czerwik's theorem mentioned above. This occurs in the proof of Theorem 1 in \cite{czerwik_1993} (and also in the essentially same proof of Theorem 12.2 in \cite{kirk_fixed_2014}) at the step of proving that $(x_k)_{k\in\mathbb{N}}$ is a Cauchy sequence: the definition of $(x_k)_{k\in\mathbb{N}}$ depends on the choice of $\varepsilon$. However, in our proof we show that this inaccuracy can be corrected, and thus the conclusion of Czerwik's theorem remains true.

\section{Main result}
In this section we recall the notion of $b$-metric spaces, the statement of Theorem 1 in \cite{czerwik_1993} and we present our corrected proof.
\begin{definition}
	We say that \emph{$(X,d)$ is a $b$-metric space with constant $s\ge1$} if $d\colon X\times X\to [0,\infty)$ satisfies the following conditions for every $x,y,z\in X$:
\begin{itemize}
		\item[(i)] $d(x,y)=0$ if and only if $x=y$;
		\item[(ii)] $d(x,y)=d(y,x)$;
		\item[(iii)] $d(x,y)\le s[d(x,z)+d(z,y)]$.
\end{itemize}
\end{definition}
\begin{theorem}
	Let $(X,d)$ be a complete $b$-metric space with constant $s\ge1$ and suppose that $T\colon X\to X$ satisfies
	\begin{equation}
		d(Tx,Ty)\le\varphi(d(x,y))
		\label{eq:fi}
	\end{equation}
	for all $x,y\in X$, where $\varphi\colon[0,\infty)\to[0,\infty)$ is increasing and
	\[\lim_{n\to\infty}\varphi^n(t)=0\]
	for each $t\ge0$. Then $T$ has a unique fixed point $x^*\in X$ and $\lim_{n\to\infty} T^n(x)=x^*$ for each $x\in X$.
	\label{thm:main}
\end{theorem}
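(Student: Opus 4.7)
My plan is to work throughout with a single Picard sequence $x_n := T^n(x_0)$ for an arbitrary $x_0 \in X$, chosen once and independently of any auxiliary $\varepsilon$; keeping this discipline is precisely the point where Czerwik's argument slips. A routine induction on $n$, using the contraction \eqref{eq:fi} together with the monotonicity of $\varphi$, gives the asymptotic regularity $d(x_n, x_{n+1}) \le \varphi^n(d(x_0, x_1)) \to 0$, and more generally $d(x_n, x_m) \le \varphi^n(d(x_0, x_{m-n}))$ for all $0 \le n \le m$. Before the Cauchy step it will be useful to record two consequences of the hypotheses: first, $\varphi(t) < t$ for all $t > 0$ (otherwise $\varphi^n(t) \ge t$ for every $n$, contradicting $\varphi^n(t) \to 0$); second, $\lim_{t \to 0^+} \varphi(t) = 0$ (a right-limit equal to some $c > 0$ would, by monotonicity, force $\varphi^n(c) \ge c$ for every $n$).

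The heart of the proof, and the site of the inaccuracy, is showing that $(x_n)$ is Cauchy. I will argue by contradiction: assume there exist $\varepsilon > 0$ and subsequences of indices $m_k < n_k$ with $m_k \to \infty$ and $d(x_{m_k}, x_{n_k}) \ge \varepsilon$ for every $k$, and choose each $n_k$ minimal with this property so that also $d(x_{m_k}, x_{n_k - 1}) < \varepsilon$. Crucially, only the index subsequences $(m_k)$ and $(n_k)$ depend on $\varepsilon$; the Picard sequence itself stays fixed, and this is what separates my argument from Czerwik's. The $b$-metric triangle inequality (with constant $s$) will then control $d(x_{m_k - 1}, x_{n_k - 1})$ in terms of $\varepsilon$ and the vanishing quantities $d(x_{m_k - 1}, x_{m_k})$, $d(x_{n_k - 1}, x_{n_k})$. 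Feeding this into the contraction and iterating $\varphi$ through the fixed Picard sequence produces chains of the form $\varepsilon \le d(x_{m_k}, x_{n_k}) \le \varphi^p\bigl(d(x_{m_k - p}, x_{n_k - p})\bigr)$ for any $0 \le p \le m_k$, and the hypothesis $\varphi^p(t) \to 0$ should finally deliver the contradiction once the argument of $\varphi^p$ is suitably bounded.

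Once Cauchy, completeness of $X$ yields a limit $x^* \in X$. To check $x^* = T x^*$, I will use the $b$-metric bound $d(x^*, T x^*) \le s\, d(x^*, x_{n+1}) + s\, \varphi(d(x_n, x^*))$, letting both summands vanish as $n \to \infty$ thanks to $\lim_{t \to 0^+} \varphi(t) = 0$. Uniqueness is then immediate from $d(x^*, y^*) \le \varphi(d(x^*, y^*))$ together with $\varphi(t) < t$ for $t > 0$, and the convergence $T^n(x) \to x^*$ for any $x \in X$ follows by rerunning the argument from $x$ in place of $x_0$. The hard part will be the Cauchy step: beyond the bookkeeping needed to keep the Picard sequence independent of $\varepsilon$, one must control the arguments $d(x_{m_k - p}, x_{n_k - p})$ that appear inside $\varphi^p$, which reduces to bounding the orbit $\{d(x_0, x_n) : n \ge 0\}$ and then exploiting the fact that $\varphi^n \to 0$ uniformly on bounded intervals (since $\varphi^n$ is increasing, $\sup_{t \le T} \varphi^n(t) = \varphi^n(T) \to 0$).
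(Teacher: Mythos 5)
Your setup (one Picard sequence fixed once and for all, independent of $\varepsilon$), the preliminary observations that $\varphi(t)<t$ for $t>0$ and that $\varphi(t)\to 0$ as $t\to 0^+$, and the fixed-point and uniqueness steps are all correct and close in spirit to the paper. The genuine gap sits exactly where you place it yourself: the Cauchy step. Your chain $\varepsilon \le d(x_{m_k},x_{n_k}) \le \varphi^p\bigl(d(x_{m_k-p},x_{n_k-p})\bigr)$ yields a contradiction only if the arguments of $\varphi^p$ stay in a fixed bounded set as $p\to\infty$, and you propose to obtain this by ``bounding the orbit.'' But boundedness of the orbit is not a routine reduction here --- it is essentially equivalent to the whole difficulty. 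The only a priori estimates available are $d(x_n,x_{n+1})\le\varphi^n(t_0)$ and the relaxed triangle inequality, and chaining the latter over $n$ steps costs a factor of order $s^n$; the resulting bound $d(x_0,x_n)\le\sum_{i=0}^{n-1} s^{i+1}\varphi^i(t_0)$ need not converge, because $\varphi^n(t)\to 0$ carries no rate (take $\varphi(t)=t/2$ on $[0,2)$ and $\varphi(t)=t-1$ on $[2,\infty)$: then $\varphi^n(t_0)$ decays only linearly while $s^n$ grows geometrically). The same accumulation of powers of $s$ defeats the contradiction route: minimality of $n_k$ controls $d(x_{m_k-p},x_{n_k-p})$ only up to a factor of order $s^p$, and $\varphi^p(s^pC)$ need not tend to $0$ (with the same $\varphi$, $\varphi^p(s^pC)\approx s^pC-p\to\infty$). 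So the plan, as written, begs the question at its one hard point.

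The paper's proof supplies precisely the missing device. For each $\varepsilon>0$ it chooses $\widetilde n$ with $\varphi^{\widetilde n}(\varepsilon)<\varepsilon/(2s)$ and then $\widetilde m$ with $d(T^{\widetilde n}x_{\widetilde m\widetilde n},x_{\widetilde m\widetilde n})<\varepsilon/(2s)$, which forces $T^{\widetilde n}$ to map the ball $B(x_{\widetilde m\widetilde n},\varepsilon)$ into itself. This traps the entire subsequence $(x_{m\widetilde n})_{m\ge\widetilde m}$ inside one ball of radius $\varepsilon$ about a single center, so the final Cauchy estimate chains only four triangle inequalities and pays only the fixed price $4s^3$, never an unbounded power of $s$. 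Note that the sequence itself never changes with $\varepsilon$ --- only the indices $\widetilde n,\widetilde m,m_0$ do --- so the corrected proof already achieves the discipline you are aiming for. To complete your argument you would need this invariant-ball lemma or an equivalent localization; and once you have it (hence orbit boundedness by some $M$), your non-Cauchy subsequence machinery becomes unnecessary, since $d(x_n,x_m)\le\varphi^{\min(n,m)}(M)\to 0$ follows directly.
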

\begin{proof}
	Let $x\in X$ and define $x_{k}=T^kx$ for every $k\in\mathbb{N}$. Since for every
	$m,n\in\mathbb{N}$ we can apply $mn$ times \eqref{eq:fi} to get
	\[d(T^n x_{mn},x_{mn})\le\varphi^{mn}(d(x_n,x_0)),\]
	it follows that 
	\begin{equation}
	\lim_{m\to\infty}d(T^n x_{mn},x_{mn})=0, \quad\forall n\in\mathbb{N}.
		\label{eq:lim}
	\end{equation}
	In the next step for any $\varepsilon>0$ we construct an $\widetilde n$ and $\widetilde m$ such that 
	\begin{equation}
		T^{\widetilde n}(B(x_{\widetilde m \widetilde n},\varepsilon))\subseteq B(x_{\widetilde m \widetilde n},\varepsilon),
		\label{eq:sphere}
	\end{equation}
	where $B(x_{\widetilde m \widetilde n},\varepsilon)=\{u\in X\mid d(u,x_{\widetilde m \widetilde n})<\varepsilon\}$.
	First we choose $\widetilde n\in\mathbb{N}$ such that $\varphi^{\widetilde n}(\varepsilon)<\frac{\varepsilon}{2s}$. By \eqref{eq:lim}, for $\varepsilon$ and $\widetilde n$ we can choose an $\widetilde m\in\mathbb{N}$ such that 
	\[
		d(T^{\widetilde n}x_{m\widetilde n},x_{m\widetilde n})<\frac{\varepsilon}{2s},\quad \forall m\ge\widetilde m.
	\]
	The inclusion given in \eqref{eq:sphere} holds for these indices since for any $u\in B(x_{\widetilde m\widetilde n},\varepsilon)$  both inequalities 
	\[d(T^{\widetilde n}u,T^{\widetilde n}x_{\widetilde m\widetilde n})\le\varphi^{\widetilde n}(d(u,x_{\widetilde m\widetilde n}))\le\varphi^{\widetilde n}(\varepsilon)<\frac{\varepsilon}{2s}\]
	and
	\[d(T^{\widetilde n}x_{\widetilde m\widetilde n},x_{\widetilde m\widetilde n})\le \frac{\varepsilon}{2s}\]
	are satisfied, hence we can use the relaxed  triangle inequality to obtain
	\begin{align*}
	d(T^{\widetilde n} u, x_{\widetilde m\widetilde n})&\le s\big[d(T^{\widetilde n} u,T^{\widetilde n}x_{\widetilde m\widetilde n})+ d(T^{\widetilde n}x_{\widetilde m\widetilde n},x_{\widetilde m\widetilde n})\big]\\
	&<s\left[\frac{\varepsilon}{2s}+\frac{\varepsilon}{2s}\right]=\varepsilon.
	\end{align*}

	We observe that \eqref{eq:lim} also implies that there exists an $m_0\in\mathbb{N}$ such that 
	\begin{equation}
		d(x_{m\widetilde n},x_{m\widetilde n+p})<\varepsilon,\quad\forall m\ge m_0, \forall p\in\{0,1,\dots,\widetilde n-1\}.
		\label{eq:mp}
	\end{equation}
	Indeed, since \eqref{eq:lim} holds for $n=1$, there exists $k_0\in \mathbb{N}$ such that 
	\[d(x_{k+1},x_k)<\frac{\varepsilon}{\widetilde ns^{\widetilde n}},\]
	for all $k\ge k_0$.
	Let $m_0$ be such that $m_0\widetilde n>k_0$. We can apply $p-1$ times the relaxed triangle inequality to obtain
	\[
		d(x_{m\widetilde n},x_{m\widetilde n+p})\le\sum_{i=0}^p  s^{i+1} d(x_{m\widetilde n+i},x_{m\widetilde n+i+1})\le\sum_{i=0}^{\widetilde n-1}  s^{\widetilde n}\frac{\varepsilon}{\widetilde ns^{\widetilde n}}=\varepsilon,
	\]
	for all $m>m_0$ and $p\in\{0,...,\widetilde n-1\}$.
	
	We have prepared all the necessary technicalities  to prove that $(x_k)_{k\in\mathbb{N}}$ is a Cauchy sequence. 
	For any $\varepsilon>0$ first construct $\widetilde n,\widetilde m$ such that \eqref{eq:sphere} holds and then $m_0$ such that \eqref{eq:mp} is also satisfied. 
	Let $\overline m=\max\{\widetilde m, m_0\}$ and $k_1,k_2\in\mathbb{N}$ such that $k_1,k_2\ge \overline m \widetilde n$. 
	We can write $k_1=m_1 \widetilde n+p_1$, $k_2=m_2\widetilde n+p_2$, where $p_1,p_2\in\{0,...,n-1\}$ and $m_1,m_2\ge \overline m$.
	The construction of these indices implies
	\begin{align*}
		d(x_{k_1},x_{m_1\widetilde n})<\varepsilon \mbox{ and } d(x_{m_2\widetilde n},x_{k_2})<\varepsilon,&\quad\mbox{by \eqref{eq:mp}};\\
		d(x_{m_1\widetilde n},x_{\widetilde m\widetilde n})<\varepsilon\mbox{ and } d(x_{\widetilde m\widetilde n},x_{m_2\widetilde n})<\varepsilon,&\quad\mbox{by \eqref{eq:sphere}}.
	\end{align*} 
	Therefore we can use the relaxed triangle inequality to obtain
	\begin{align*}
		d(x_{k_1},x_{k_2})&\le s d(x_{k_1},x_{m_1\widetilde n})+s^2d(x_{m_1\widetilde n},x_{\widetilde m\widetilde n})\\
		&\qquad +s^3d(x_{\widetilde m\widetilde n},x_{m_2\widetilde n})+s^3d(x_{m_2\widetilde n},x_{k_2})\\
		&\le s\varepsilon+s^2\varepsilon+s^3\varepsilon+s^3\varepsilon\\
		&\le 4s^3\varepsilon.
	\end{align*}

	Thus we proved that $(x_k)_{k\in\mathbb{N}}$ is a Cauchy sequence.
	Since $(X, d)$ is complete, there exists an $x^*\in X$ such that $x_{k} \to x^*$. Then 
    \begin{align*}
    s^{-1} d(x^*, Tx^*)&\le \liminf_{k\to\infty} d(x_{k+1}, T x^*)\\
    &\le \limsup_{k\to \infty} d(x_{k+1}, T x^*) \\
    &= \limsup_{k\to\infty} d(T x_{k}, T x^*)\\
    &\le \limsup_{k\to \infty} d(x_{k}, x^*)= 0,
    \end{align*}
    hence $T x^* = x^*$.

    It remains to prove that $T$ has no other fixed point besides of $x^*$. Suppose that $y^*$ is also a fixed point of $T$. Hence
    \[0\le d(x^*,y^*)=d(Tx^*,Ty^*)\le\varphi(d(x^*,y^*)),\]
    and since $\varphi$ is increasing we have
    \[0\le d(x^*,y^*)\le\varphi^n(d(x^*,y^*)),\quad\forall n\in\mathbb{N}.\]
    If we let $n\to\infty$, the last inequality implies $d(x^*,y^*)=0$. 

\end{proof}

\bibliographystyle{plain}
\bibliography{myref}{}

\end{document}